\theoremstyle{plain}
\newtheorem{theorem}{Theorem}
\newtheorem{cor}[theorem]{Corollary}
\theoremstyle{definition}
\newtheorem{question}{Question}
\newtheorem{conjecture}[question]{Conjecture}
\begin{document}

\title{The Average Size of a Connected Vertex Set of a $k$-connected Graph}
\author[A. Vince]{Andrew Vince}
\address{Department of Mathematics \\ University of Florida \\ Gainesville, FL 32611, USA}
\email{\tt  avince@ufl.edu} 
\subjclass[2010]{05C30}
\keywords{graph, connectedness, average order}
% \section*{Acknowledgement}
\thanks{This work was partially supported by a grant from the Simons Foundation (322515 to Andrew Vince).}

\maketitle

\begin{abstract}  
The topic is the average order $A(G)$ of a connected induced subgraph of a graph $G$.  This generalizes, to graphs in general, the average order of a subtree of a tree.  In 1984, Jamison proved that the average order, over all trees of order $n$, is minimized by the path $P_n$, the average being $A(P_n)=(n+2)/3$.  In 2018, Kroeker, Mol, and Oellermann conjectured that $P_n$ minimizes the average order over all connected graphs $G$ -
a conjecture that was recently proved.  In this short note we show that this lower bound can be improved if the connectivity of $G$ is known.  If $G$ is $k$-connected, then  
\[A(G) \geq \frac{n}2 \Bigg (1-  \frac{1}{2^k+1} \Bigg ).\]
\end{abstract}

\section{Introduction} \label{sec:intro}

 Although connectivity is a basic concept in graph theory, problems involving 
the enumeration of the connected induced subgraphs 
of a given graph have only recently received attention.  The topic of this paper is the average order of a connected induced subgraph 
of a graph.  Let $G$ be a connected finite simple graph with vertex set $V = \{1,2,\dots, n\}$, and let $U\subseteq V$. 
 The set $U$ is said to 
be a {\bf connected set} if the subgraph of $G$ induced by $U$ is connected.  Denote the collection of all connected sets, excluding the emptyset, by $\mathcal C = \mathcal C(G)$.  The number of connected sets in $G$ will be denoted by $N(G)$.   Let
\[S(G) = \sum_{U \in \mathcal C} |U|  \] 
be the sum of the sizes of the connected sets.
Further, let
\[ A(G) = \frac{S(G)}{N(G)}   \qquad  \qquad \text{and}  \qquad  \qquad   D(G) =\frac{ A(G)}{n} \]
denote, respectively, the average size of a connected set of $G$ and the proportion of vertices in an average size connected set.  The parameter $D(G)$ is referred to as the {\bf density} of connected sets of vertices.    The density
allows us to compare the  average size of connected sets of graphs of different orders.   If, for example, $G$ is the complete graph $K_n$, then $A(K_n)$ is the average size of a subset of an $n$-element set, which is $n/2$ (counting the empty set for simplicity), and the density is $1/2$.

Papers \cite{H,J,J2,MO,SO,V,WW,YY} on the average size and density of connected  sets of trees appeared
beginning with Jamison's 1984 paper \cite{J}.  The invariant $A(G)$, in this case, is the average order of a subtree of a tree.   Concerning lower bounds, Jamison proved that the average size of a subtree of a tree of order $n$ is minimized by the path $P_n$.  In particular $A(T) \geq (n+2)/3$ for all trees $T$ with equality only for $P_n$.  Therefore $D(T) > 1/3$ for all trees $T$.  Vince and Wang \cite{V} proved that if $T$ is a tree all of whose non-leaf vertices have degree at least three,  then $\frac 12 \leq D(T) < \frac34,$  both bounds being best possible.

Although results are known for trees, little was known until recently for graphs in general.  Kroeker, Mol, and Oellermann conjectured in their 2018 paper \cite{O} 
that the lower bound of Jamison for trees extends to graphs in general.  

\begin{conjecture} The $P_n$ minimizes the average size of a connected set over all connected graphs.  
\end{conjecture}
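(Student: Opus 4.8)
The plan is to establish the equivalent inequality $S(G)\ge\frac{n+2}{3}\,N(G)$, with equality only for $G=P_n$, by induction on $n=|V(G)|$; the cases $n\le 3$ are a finite check. For the inductive step I would distinguish whether or not $G$ has a vertex of degree $1$, and the whole argument is steered by the requirement that the unique tight instance be $P_n$, and locally that $P_n$ rooted at a leaf be the unique tight instance of an auxiliary inequality.

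\emph{Deleting a leaf.} Let $v$ be a leaf of $G$ with unique neighbour $w$ and put $G'=G-v$, which is connected on $n-1$ vertices. A connected set of $G$ avoiding $v$ is precisely a connected set of $G'$, while a connected set of $G$ containing $v$ is either $\{v\}$ or $\{v\}\cup W$ with $W$ a connected set of $G'$ containing $w$. Writing $N_w$ and $S_w$ for the number of, and total size of, the connected sets of $G'$ containing $w$, this gives
\[
N(G)=N(G')+N_w+1,\qquad S(G)=S(G')+S_w+N_w+1.
\]
Feeding in the inductive hypothesis $S(G')\ge\frac{n+1}{3}\,N(G')$ and clearing denominators, the target inequality for $G$ reduces to the purely local statement
\[
3\,S_w(H)\ \ge\ N(H)+m\bigl(N_w(H)+1\bigr)
\]
for every connected graph $H$ on $m$ vertices and every $w\in V(H)$ (used with $H=G'$, $m=n-1$); tracking equality in both steps then pins $G$ down to $P_n$.

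\emph{The local inequality is the crux.} It is exactly tight when $H=P_m$ and $w$ is an endpoint, both sides equalling $\tfrac32 m(m+1)$, so it is sharp and its proof is the heart of the matter. The natural attempt is to prove it alongside the main inequality, by a joint induction on $m$: delete a leaf $x$ of $H$ and split into the cases $x=w$, $x$ adjacent to $w$, and $x$ farther from $w$; in each case $N_w(H)$, $S_w(H)$ and $N(H)$ reduce to the corresponding quantities for $H-x$ together with counts of connected sets of $H-x$ containing $w$, and possibly the neighbour of $x$. The obvious danger is that this recursion pulls in counts of connected sets relative to ever larger vertex sets; taming it — most plausibly by proving at one stroke a whole family of ``rooted'' inequalities, each sharp on a path — is where I expect the genuine difficulty to lie.

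\emph{Minimum degree at least two.} Leafless graphs are not covered by the leaf-deletion step, and they need not be far from extremal: adding the two chords $\{1,3\}$ and $\{n-2,n\}$ to $P_n$ gives a graph of minimum degree $2$ whose average exceeds $\frac{n+2}{3}$ by only a bounded amount, so a crude estimate will not suffice here either. Instead one deletes a non-cut vertex $v$ of degree $2$ with neighbours $a,b$: a connected set of $G$ through $v$ is $\{v\}$, or $\{v\}\cup W$ for a connected set $W$ of $G-v$ meeting $\{a,b\}$, or $\{v\}\cup W_a\cup W_b$ for disjoint connected sets $W_a\ni a$, $W_b\ni b$ of $G-v$. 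This expresses $N(G)$ and $S(G)$ through connected-set counts of $G-v$ relative to $a$, to $b$, and to the pair $\{a,b\}$, after which one closes the loop with the inductive hypothesis for $G-v$ (which, being smaller, may again have leaves) plus rooted local estimates of the kind above. Reconciling the two reductions, proving the rooted inequalities, and doing the equality book-keeping so that only $P_n$ survives is, all told, the principal obstacle.
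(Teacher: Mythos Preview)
The paper does not prove this statement. It records it as a conjecture of Kroeker, Mol, and Oellermann, reports that it was settled in \cite{V1} and \cite{H2}, and then quotes the outcome as Theorem~\ref{thm:main2} without argument. The only trace of those external proofs visible here is Theorem~\ref{thm:v}, the rooted bound $A(G,i)\ge(n+1)/2$ imported from \cite{V1}; that inequality, rather than a leaf-deletion induction, is the engine of the published approach, so there is no in-paper proof for your outline to be compared with.

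As for the outline itself, you are candid that it is incomplete, and the gaps are genuine. The leaf-deletion reduction to
\[
3\,S_w(H)\ \ge\ N(H)+m\bigl(N_w(H)+1\bigr)
\]
is derived correctly and is indeed sharp on end-rooted paths, but you do not prove it, and the ``family of rooted inequalities'' you anticipate needing is left entirely open. Your leafless branch has a further hole you do not flag: you propose to delete a non-cut vertex of degree~$2$, but such a vertex need not exist --- $K_4$, or two copies of $K_4$ glued at a vertex, are leafless with no degree-$2$ vertex at all --- so graphs of minimum degree at least~$3$ fall through your case split untouched. What you have is a plausible architecture with both load-bearing pieces still to be supplied.
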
 

In \cite{B}  Balodis, Mol, and Oellermann verified the conjecture for block graphs of order $n$, i.e., for graphs each maximal $2$-connected component of which is a complete graph.  The conjecture was proved recently for connecterd graphs in general  in \cite{V1} and independently shortly thereafter in \cite{H2}.   

\begin{theorem} \label{thm:main2}  If $G$ is a connected graph of order $n$, then 
\[A(G) \geq \frac{n+2}{3},\]
with equality if and only if $G$ is a path.  In particular, $D(G) > 1/3$ for all connected graphs $G$.  
\end{theorem}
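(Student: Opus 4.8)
The plan is to prove the equivalent statement $3S(G) \ge (n+2)\,N(G)$ by induction on $n$, since $A(G) = S(G)/N(G)$. Write $f(H) = 3S(H) - (|V(H)|+2)\,N(H)$, so the goal is $f(G)\ge 0$, with equality only for $G = P_n$. For a vertex $u$ of $H$ let $N_u(H)$ and $S_u(H)$ denote, respectively, the number and the total size of the connected sets of $H$ containing $u$. The backbone of the induction is a leaf-deletion recursion: if $v$ is a leaf of $G$ with neighbour $u$ and $G' = G - v$, then a connected set of $G$ either avoids $v$ (and is a connected set of $G'$), equals $\{v\}$, or is $\{v\}$ together with a connected set of $G'$ containing $u$; hence
\[
N(G) = N(G') + N_u(G') + 1, \qquad S(G) = S(G') + S_u(G') + N_u(G') + 1 .
\]
A short computation turns this into the clean identity $f(G) = f(G') + g(G', u)$, where I set
\[
g(H,u) := 3S_u(H) - |V(H)|\,N_u(H) - N(H) - |V(H)| .
\]
So whenever $G$ has a leaf the inductive step reduces to $f(G')\ge 0$ together with the auxiliary inequality $g(G',u)\ge 0$.

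I would therefore strengthen the induction hypothesis to assert simultaneously, for every connected graph $H$ and every vertex $u$, that $f(H)\ge 0$ and $g(H,u)\ge 0$, with equality only for suitable paths — $g(H,u)=0$ forcing $H = P_{|V(H)|}$ with $u$ an end-vertex. The bound $g(H,u)\ge 0$ is again approached by leaf-deletion on $H$. If $u$ itself is a leaf of $H$, attached at $t$, one obtains $g(H,u) = g(H-u,\,t) + \bigl(N_t(H-u) - |V(H-u)|\bigr)$; and $N_t(H')\ge |V(H')|$ for every connected $H'$, since one can grow a connected set containing $t$ one vertex at a time and so obtain a connected set through $t$ of every size from $1$ to $|V(H')|$. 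So that subcase closes, using the hypothesis for $H-u$. The delicate subcase is when $u$ is not a leaf: deleting some other leaf $w$, attached at $t$, introduces the doubly-marked quantities $N_{u,t}$ and $S_{u,t}$, counting connected sets required to contain both $u$ and $t$; iterating, one is forced to carry along bounds for connected sets containing a prescribed set of vertices. Organising this bookkeeping — which leaf to delete at each stage, and how the multiply-marked parameters propagate through the leaf-deletion recursion and through the cut-vertex gluing identities (where, helpfully, the marked average is additive, $A_v(G_1\cup_v G_2) = A_v(G_1) + A_v(G_2) - 1$) — is the main obstacle I anticipate.

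There remains the case when $G$ is $2$-connected and so has no leaf; this is handled separately. Here the average is in fact much larger, and a self-contained argument (simpler than the general $k$-connected bound established below) gives for $2$-connected $G$ a lower bound linear in $n$ with a constant exceeding $\tfrac13$, so that $A(G)\ge (n+2)/3$ once $n$ is not tiny, the finitely many small $2$-connected graphs being checked by hand. One should resist instead deleting an edge that lies on a cycle: this keeps $G$ connected but can move $A$ in either direction, because $A$ is \emph{not} monotone under edge deletion, so such an argument would need the $2$-connectedness to fix the sign of the change in $f$.

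Finally, the equality statement is read off by running the inequalities backwards. If $f(G) = 0$ then at each leaf-deletion step both $f(G')=0$ and $g(G',u)=0$; the latter forces $G'$ to be a path with $u$ an end-vertex, so re-attaching $v$ again yields a path. For $2$-connected $G$ one has $f(G)>0$ beyond trivially small orders. Hence $f(G) = 0$ only for $G = P_n$, which is Theorem~\ref{thm:main2}.
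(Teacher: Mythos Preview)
The paper does not actually prove Theorem~\ref{thm:main2}. It is quoted from \cite{V1} (and independently \cite{H2}) and then \emph{used} as the base case $k=1$ in the inductive proof of Theorem~\ref{thm:main}. So there is no ``paper's own proof'' to compare against; your proposal must stand or fall on its own.

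As a plan it is reasonable in outline, but it is not a proof: you yourself flag the two places where the argument is missing, and neither gap is cosmetic.

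\textbf{The auxiliary inequality $g(H,u)\ge 0$.} Your leaf-deletion recursion for $f$ is correct, and so is the reduction $f(G)=f(G')+g(G',u)$ and the identity $g(H,u)=g(H-u,t)+\bigl(N_t(H-u)-|V(H-u)|\bigr)$ when $u$ is a leaf. But the case where $u$ is \emph{not} a leaf is the heart of the matter, and you stop at ``this is the main obstacle I anticipate''. Deleting some other leaf $w$ indeed drags in doubly rooted counts $N_{u,t},S_{u,t}$, and iterating forces you to control $S_R/N_R$ for an arbitrary prescribed root set $R$. That is exactly the content of the local-mean lower bound proved in \cite{V1} (your Theorem~\ref{thm:v} here is the single-root special case); without that machinery you have no way to close the recursion. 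In other words, the step you label an ``obstacle'' is essentially the whole theorem.

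\textbf{The $2$-connected case.} You dispose of leafless $G$ by asserting that ``a self-contained argument (simpler than the general $k$-connected bound established below)'' gives a linear lower bound with leading constant $>1/3$. But you do not supply that argument, and you cannot borrow the $k$-connected bound from this paper: the proof of Theorem~\ref{thm:main} invokes Theorem~\ref{thm:main2} as its $k=1$ base case, so using Theorem~\ref{thm:main} (even at $k=2$) to establish Theorem~\ref{thm:main2} is circular. Your remark that one should ``resist deleting an edge on a cycle'' because $A$ is not edge-monotone is well taken, but it only explains why the obvious shortcut fails; it does not replace the missing argument.

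In short: the recursion framework is sound and the equality analysis at the end is fine, but the two substantive steps are left as intentions rather than proofs.
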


In \cite{V3} it was conjectured that the lower bound of Vince and Wang for trees extends to graphs in general.  

\begin{conjecture} \label{conj:deg} If $G$ is a conectred graph all of whose vertices have degree at least $3$, then $D(G) \geq \frac12$.  
\end{conjecture}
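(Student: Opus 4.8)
The plan is to reduce the density bound to a comparison between connected sets and their complements, and then to establish that comparison by an injection that genuinely exploits the minimum-degree hypothesis. Writing $N = N(G)$ and $S = S(G)$, the inequality $D(G) \ge \tfrac12$ is equivalent to $2S \ge nN$, and since $\sum_{U \in \mathcal C}(n - |U|) = nN - S$, it is equivalent to
\[ \sum_{U \in \mathcal C} |U| \;\ge\; \sum_{U \in \mathcal C} |V \setminus U|. \]
Double counting each side by vertices, this reads $\sum_{v} c_v \ge \sum_v (N - c_v)$, where $c_v$ denotes the number of connected sets containing $v$. Equivalently, I want to produce an injection
\[ \Phi : \{(U,w) : U \in \mathcal C,\ w \in V \setminus U\} \longrightarrow \{(U,v) : U \in \mathcal C,\ v \in U\}, \]
since the domain has size $nN - S$ and the codomain has size $S$.

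First I would dispose of the easy part of $\Phi$: when $w$ is adjacent to $U$, set $\Phi(U,w) = (U \cup \{w\}, w)$, which lands in the codomain because $U \cup \{w\}$ is connected and contains $w$, and which is visibly injective since $U$ and $w$ are recovered from the image. The substance lies in the remaining case, where $w$ has no neighbor in $U$. Here the idea is to use the minimum degree at least $3$ to \emph{grow} $U$ toward $w$ along a shortest path while keeping a free neighbor in reserve at each frontier vertex, so that the enlarged set still carries a distinguished vertex recording $w$ and the step can be reversed. Arranging the two rules so that their images are disjoint and each is individually injective is where the degree hypothesis must do real work, because at every frontier vertex one needs at least one neighbor not yet committed by the construction.

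I expect the main obstacle to be precisely that the complementation which forces $D(K_n) \to \tfrac12$ for complete graphs, via the perfect pairing $U \leftrightarrow V \setminus U$, has no analogue in general: the complement of a connected set is usually disconnected, so there is no global involution to imitate, and the minimum-degree-$3$ condition is exactly what must repair this. Any valid argument has to invoke that every vertex has three neighbors, since paths and other graphs with degree-$2$ vertices violate the bound (paths have $D \to \tfrac13$). Two further difficulties sharpen the challenge. The inequality should be read as an averaging statement: there is little reason to expect every individual vertex to lie in at least half of the connected sets, so $\Phi$ cannot be assembled from independent per-vertex injections and must be genuinely global. And naive induction by vertex or edge deletion is unavailable, since removing either drops a neighbor below degree $3$; a workable induction would instead require a reduction that preserves the minimum degree, such as suppressing or contracting a carefully chosen degree-$3$ vertex while tracking the effect on both $S$ and $N$.
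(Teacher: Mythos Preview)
The statement you are attempting is Conjecture~\ref{conj:deg}, which the paper explicitly declares to be \emph{open}: ``Conjecture~\ref{conj:deg} remains open, but in this note we prove a lower bound on $D(G)$ close to $1/2$ if $G$ is highly connected.''  There is therefore no proof in the paper to compare your attempt to; Theorem~\ref{thm:main} establishes only the weaker bound $D(G)\ge \tfrac12(1-1/(2^k+1))$ under a $k$-connectedness hypothesis, by induction on $k$ together with Corollary~\ref{cor:1}.

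Your reduction is correct: $D(G)\ge 1/2$ is equivalent to $2S\ge nN$, hence to the existence of an injection from $\{(U,w):U\in\mathcal C,\ w\notin U\}$ into $\{(U,v):U\in\mathcal C,\ v\in U\}$, and the rule $\Phi(U,w)=(U\cup\{w\},w)$ when $w$ is adjacent to $U$ is a valid injection on that subdomain.  But the proposal stops precisely where the problem begins.  For the non-adjacent case you offer only the phrase ``grow $U$ toward $w$ along a shortest path while keeping a free neighbor in reserve,'' with no definition of the resulting pair, no verification of injectivity, and no verification that its image avoids the image of the easy rule.  You yourself flag these as the ``main obstacle'' and then list further difficulties (no per-vertex argument, no clean induction) without resolving any of them.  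As written, this is a thoughtful discussion of why the conjecture is hard, not a proof; the central construction is absent, and nothing in the text uses the hypothesis $\delta(G)\ge 3$ in a way that could not equally be said of $\delta(G)\ge 2$, where the conclusion is false (long cycles have density tending to $1/2$ from \emph{below}, and subdivided graphs can dip lower).  Until the non-adjacent case of $\Phi$ is actually specified and shown to be injective with image disjoint from the adjacent case, there is no argument here.
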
   

Kroeker, Mol, and Oellermann \cite{O} verified Conjecture~\ref{conj:deg} for connected cographs.  A {\it cograph} can be defined recursively: the one vertex graph is a cograph and, if $G$ and $H$ are cographs, then so is their disjoint union and their join. (The {\it join} is obtained by joining by an edge each vertex of $G$ to each vertex of $H$.)  Complete bipartite graphs are examples of cographs. Conjecture~\ref{conj:deg} remains open, but in this note we prove a lower bound on $D(G)$ close to $1/2$ if $G$ is highly connected.   More precisely:

\begin{theorem} \label{thm:main}
If $G$ is $k$-connected, then 
\[D(G) \geq \frac12 \Bigg (1-  \frac{1}{2^k+1} \Bigg ).\]
\end{theorem}

\section{Proof of Theorem~\ref{thm:main} } \label{sec:proof}

If $i\in V$, let $N(G,i), S(G,i)$, and $A(G,i)$ denote the number of connected sets in 
$G$ containing $i$, the sum of the sizes of all connected sets containing $i$, and the average size of a connected set containing $i$, respectively.  The following result appears in \cite[Corollary 3.2]{V1}.

\begin{theorem} \label{thm:v}
If $i\in V$ is any vertex of a connected graph $G$ of order $n$, then
\[A(G,i) \geq \frac{n+1}{2}.\]
\end{theorem}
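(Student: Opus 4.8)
The plan is to turn the inequality $A(G,i)\ge (n+1)/2$ into an equivalent counting statement and then to prove it by a single injection. Since $A(G,i)=S(G,i)/N(G,i)$, the assertion is exactly $2S(G,i)\ge (n+1)N(G,i)$, and I would expand both sides by double counting over the vertices $v\ne i$. For such a $v$ let $q_v$ denote the number of connected sets containing both $i$ and $v$, and let $m_v$ denote the number of connected sets containing $i$ but not $v$; every set counted by $N(G,i)$ contains $i$, so $q_v+m_v=N(G,i)$. Summing $|U|=1+|U\setminus\{i\}|$ over all connected $U$ with $i\in U$ gives $S(G,i)=N(G,i)+\sum_{v\ne i}q_v$, while the analogous count of complements gives $\sum_{U\ni i}|V\setminus U|=\sum_{v\ne i}m_v$. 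A direct computation using $q_v+m_v=N(G,i)$ then shows that $2S(G,i)-(n+1)N(G,i)=\sum_{v\ne i}(q_v-m_v)$, so Theorem~\ref{thm:v} is equivalent to the clean inequality
\[ \sum_{v\ne i}q_v\ \ge\ \sum_{v\ne i}m_v, \]
that is, summed over all $v\ne i$, a vertex lies inside a connected set through $i$ at least as often as it lies outside one.

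To establish the displayed inequality I would construct an injection $\Phi$ from the family $\mathcal P$ of pairs $(U,v)$ with $i\in U$ connected and $v\notin U$ (whose size is $\sum_{v\ne i}m_v$) into the family $\mathcal Q$ of pairs $(W,v)$ with $i\in W$ connected and $v\in W\setminus\{i\}$ (whose size is $\sum_{v\ne i}q_v$). Fix once and for all a spanning tree $T$ rooted at $i$. Given $(U,v)\in\mathcal P$, the tree path from $v$ to the root meets $U$ because it ends at $i\in U$; let $a$ be the vertex of $U$ on this path closest to $v$, and let $Q$ be the portion of the path strictly below $a$ down to $v$. Then $Q$ is disjoint from $U$ and $U\cup Q$ is connected, since $Q$ is attached to $a\in U$, and it contains $v$. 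So a natural candidate is to grow $U$ along $T$ toward $v$, landing in $\mathcal Q$.

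The main obstacle is making $\Phi$ injective, and I expect this to be the real content of the proof. The naive rule $\Phi(U,v)=(U\cup Q,v)$ already fails: inspecting $U\cup Q$ does not reveal where $U$ ended, because the attachment vertex $a$ lies in both $U$ and the grown region and cannot be distinguished from the added vertices of $Q$. Concretely, on $G=P_3$ with $i=1$ both $(\{1\},3)$ and $(\{1,2\},3)$ are sent to $(\{1,2,3\},3)$. Since the pair $(U\cup Q,v)$ carries only one marked vertex beyond the set itself, while recovering the input seems to require knowing both the endpoint $v$ and the boundary vertex $a$, the fix will have to exploit the canonical structure of $T$ together with a fixed linear order on $V$ to make the grown region, and hence $U$, reconstructible from the image; failing that, one is pushed toward a weighted or many-to-one correspondence. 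Verifying that the final map always outputs a pair of $\mathcal Q$, that its marker is never $i$, and above all that distinct inputs never collide, is where the difficulty concentrates. The earlier results in this note furnish only global averages over all of $G$ and give no leverage on a single vertex $i$, so the argument must stay entirely local to the pair $(U,v)$ and the tree $T$. Once injectivity is secured the displayed inequality, and with it Theorem~\ref{thm:v}, is immediate; the equality case should trace back to configurations in which every growth step is forced, realized precisely by an end vertex of a path, matching $A(P_n,\text{end})=(n+1)/2$. An alternative route worth keeping in reserve records, for each size $t$, the number $a_t$ of connected sets of size $t$ through $i$, reduces the theorem by pairing sizes $t$ and $n+1-t$ to the majorization-type bounds $a_{n+1-r}\ge a_r$ for small $r$, and attempts the complementation map $U\mapsto (V\setminus U)\cup\{i\}$; that map is visibly injective and size-correct but need not produce a connected set, so there the connectivity of the complement is the corresponding obstacle.
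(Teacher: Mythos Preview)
The paper does not prove this theorem at all; it simply quotes it from \cite[Corollary 3.2]{V1} and uses it as a black box. So there is no ``paper's own proof'' to compare against here, and your attempt has to be judged on its own merits.

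Your reduction is correct: the double count giving
\[
2S(G,i)-(n+1)N(G,i)=\sum_{v\ne i}(q_v-m_v)
\]
is valid, so Theorem~\ref{thm:v} is equivalent to $\sum_{v\ne i}q_v\ge\sum_{v\ne i}m_v$, and an injection $\mathcal P\hookrightarrow\mathcal Q$ would indeed finish the argument. But the proposal stops precisely at the hard step. You write down the naive growth map $\Phi(U,v)=(U\cup Q,v)$, correctly observe that it is not injective (your $P_3$ example is exactly the failure), and then say only that ``the fix will have to exploit the canonical structure of $T$ together with a fixed linear order on $V$''---without specifying the map or proving it injective. The same is true of your alternative complementation route: you note that $U\mapsto (V\setminus U)\cup\{i\}$ need not be connected and leave it there. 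In other words, you have accurately located the obstacle in both approaches but have not overcome it in either one; as written, this is a strategy memo rather than a proof.

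It is also worth flagging that the obvious repairs to the growth map do not work. Changing the marker from $v$ to the attachment vertex $a$ fails because $a$ can equal $i$ (e.g.\ when $U=\{i\}$), landing outside $\mathcal Q$; changing it to the child $b$ of $a$ on the tree path recovers $a$ but still does not let you separate $U$ from $Q$ inside $W$, since $U$ may reach below $b$ through non-tree edges. And the vertexwise inequality $q_v\ge m_v$ that a careless reading might suggest is simply false (take $i$ a leaf of a star and $v$ another leaf), so any successful injection must genuinely move mass between different $v$'s. None of this is addressed. Until an explicit injective $\Phi$ is exhibited and verified, the argument has a genuine gap at its central step.
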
 

\begin{cor} \label{cor:1} If $G$ is a connected graph of order $n$, then
\[\sum_{U \in \mathcal C} |U|^2  \geq \Big (\frac{n+1}{2} \Big ) S(G).\]
\end{cor}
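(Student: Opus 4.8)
The plan is to deduce this directly from Theorem~\ref{thm:v} by an interchange in the order of summation, rewriting the sum of squares $\sum_{U\in\mathcal C}|U|^2$ as a sum over the vertices of $G$.

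First I would use the identity $|U|^2=\sum_{i\in U}|U|$ to obtain
\[
\sum_{U\in\mathcal C}|U|^2=\sum_{U\in\mathcal C}\sum_{i\in U}|U|
=\sum_{i\in V}\ \sum_{U\in\mathcal C,\ i\in U}|U|=\sum_{i\in V}S(G,i).
\]
Next, since $S(G,i)=A(G,i)\,N(G,i)$, Theorem~\ref{thm:v} gives $S(G,i)\ge\frac{n+1}{2}\,N(G,i)$ for every $i\in V$; summing over $i$ yields
\[
\sum_{U\in\mathcal C}|U|^2\ \ge\ \frac{n+1}{2}\sum_{i\in V}N(G,i).
\]
Finally, the same interchange of summation gives $\sum_{i\in V}N(G,i)=\sum_{i\in V}\sum_{U\in\mathcal C,\ i\in U}1=\sum_{U\in\mathcal C}|U|=S(G)$, and combining the last two displays completes the proof.

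I do not anticipate any real obstacle here: once one notices the two bookkeeping identities $\sum_{U\in\mathcal C}|U|^2=\sum_{i\in V} S(G,i)$ and $\sum_{i\in V} N(G,i)=S(G)$, the corollary is essentially a repackaging of Theorem~\ref{thm:v}. The only minor point to check is that $\mathcal C$ excludes the empty set, but this is harmless since the empty set would contribute $0$ to both sides in any case. If one also wanted the equality case, it would follow from the equality analysis behind Theorem~\ref{thm:v} (equality forcing $A(G,i)=(n+1)/2$ for every $i$), though the statement as given does not require it.
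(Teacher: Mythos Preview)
Your proof is correct and follows exactly the same approach as the paper's own proof: the two double-counting identities $\sum_{U\in\mathcal C}|U|^2=\sum_{i\in V}S(G,i)$ and $\sum_{i\in V}N(G,i)=S(G)$, combined with the inequality $S(G,i)\ge \frac{n+1}{2}N(G,i)$ from Theorem~\ref{thm:v}. The only difference is cosmetic ordering of the steps.
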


\begin{proof} From Theorem~\ref{thm:v} we have
\[S(G,i) \geq \frac{n+1}{2} N(i)\]
for all $i \in V$.  
Now count the number of pairs in the set $\{ (i, U) :  U \in \mathcal C, i \in U \subseteq V\}$ in two ways to obtain
\[ S(G) = \sum_{U\in \mathcal C} |U| = \sum_{i\in V} N(G,i).\]
Similarly
\[\sum_{U \in \mathcal C} |U|^2 =  \sum_{U \in \mathcal C} \sum_{i : i\in U} |U| 
= \sum_{i\in V} \sum_{U\in \mathcal C : i \in U} |U| 
= \sum_{i \in V} S(G,i) \geq \frac{n+1}{2} \sum_{i \in V} N(G,i)  =
\Big ( \frac{n+1}{2} \Big ) S(G).\]
\end{proof}

%\begin{theorem} \label{thm:main}
%If $G$ is $k$-connected, then  \[D(G) \geq \frac12 \Bigg (1-  \frac{1}{2^k+1} \Bigg ).\] \end{theorem}

\begin{proof}[Proof of Theorem~\ref{thm:main}] The proof is by induction on $k$.  When $k=1$, the statement
is $D(G) \geq \frac13$, which follows from Theorem~\ref{thm:main2} in the
introduction. 

 Let
\[ a_k := \frac12 \Bigg (1-  \frac{1}{2^k+1} \Bigg ).\]
A straightforward calculation shows that
\[a_k = \frac{ 2 a_{k-1}}{2 a_{k-1}+1}.\]

Assume that the statement of Theorem~\ref{thm:main} 
holds for all $(k-1)$-connected graphs and
asume that $G$ is $k$-connected.  Let $\mathcal C ' = \{U\in \mathcal C : U\neq V\}$,
and denote by $N'(G)$ and $S'(G)$ the number of connected sets and the sum of the sizes of the connected sets, respectively, not including $V$.  For $i \in V$, denote by $G_i$ the graph induced by the vertices $V\setminus \{i\}$.  
Note that $G_i$ is $(k-1)$-connected for all $i\in V$ and therefore, by the
induction hypothesis, we have $S(G_i) \geq a_{k-1} (n-1) N(G_i)$ for all $i\in V$.  
Now
\[\begin{aligned} n S'(G) - \sum_{U\in \mathcal C '} |U|^2 &=    
\sum_{U\in \mathcal C '} |U|(n-|U|) = \sum_{i\in V} S(G_i) \geq a_{k-1} (n-1)
\sum_{i\in V}  N(G_i) \\
&= a_{k-1} (n-1) \sum_{U\in \mathcal C '} (n-|U| ) = a_{k-1} n (n-1) N'(G) -  
a_{k-1} (n-1) S'(G).
\end{aligned}\]
This implies 
\[\big ( n + a_{k-1} (n-1) \big )(S(G)-n) \geq a_{k-1} n (n-1) (N(G)-1)  + \sum_{U\in \mathcal C} |U|^2 - n^2,\]
equivalently
\[\big ( n + a_{k-1} (n-1) \big ) S(G)\geq a_{k-1} n (n-1) N(G) + \sum_{U\in \mathcal C} |U|^2.\]
By Corollary~\ref{cor:1} this implies 
\[ \Big ( \frac{n-1}{2} + a_{k-1} (n-1) \Big ) S(G) \geq a_{k-1} n (n-1) N(G),\]
or
\[D(G) = \frac{S(G)}{n N(G)} = \frac{a_{k-1}}{ \frac{1}{2} + a_{k-1}} 
=  \frac{2 a_{k-1}}{2 a_{k-1}+1}  = a_k.\]

%\Big ( \frac{1}{2} + a_{k-1} \Big ) S(G) \geq a_{k-1} n N(G),\]
\end{proof}


\begin{thebibliography}{00}

\bibitem{B}  K. J. Balodis, L. Mol, L, and R. Oellermann, On the mean order of connected induced subgraphs of block graphs, 
{\em Australasian J. Comb.} {\bf 76} (2020) 128--148.  

\bibitem{H}  J. Haslegrave, Extremal results on average subtree density of series-reduced trees,
{\em J. Combin. Theory Ser. B} {\bf 107} (2014), 26--41.

\bibitem{H2} J. Haslegrave, The path minimises the average size of a connected set,
arXiv:2103.16491.

\bibitem{J} R. Jamison, On the average number of nodes in a subtree
of a tree.  {\em J. Combin. Theory Ser. B}  {\bf 35}  (1983) 207--223.

\bibitem {J2} R. Jamison,  Monotonicity of the mean order of subtrees, {\em J. Combin. Theory Ser.
B} {\bf 37} (1984), 70--78.

\bibitem {O} M. E. Kroeker,  L. Mol, and O. Oellermann, 
On the mean connected induced subgraph order of
cographs, {\em Australasian J. Combin.}{\bf 71} (2018) 161--183.

\bibitem {MO} L. Mol and O. Oellermann, Maximizing the mean subtree order,  J. Graph Theory,
doi.org/10.1002/jgt.22434, 2018.

\bibitem  {SO} A. M. Stephens and  O. Oellermann, The mean order of sub-$k$-trees of $k$-trees, {\em J. Graph Theory}
{\bf 88} (2018), 61--79.

\bibitem {V} A. Vince and H. Wang, The average order of a subtree of a tree, {\em J CombinTheory Ser B} {\bf 100} (2010) 161--170.

\bibitem {V3}  A. Vince, The average size of a connected vertex set of a graph - explicit formulas and open problems, {\em J. Graph Theory} {\bf 97} (2020), 82--103.  DOI: https://doi.org/10.1002/jgt.22643.

\bibitem {V1} A. Vince, A lower bound on the average size of a connected vertex
set of a graph, arXiv:2103.15174.

\bibitem {WW} S. Wagner and H. Wang, On the local and global means of subtree orders, {\em J. Graph Theory} {\bf 81}
(2016), 154--166.

\bibitem {YY} W. Yan and Y. Yeh, Enumeration of subtrees of trees, {\em Theoret. Comput. Sci.}
{\bf 369} (2006), 256--268.

\end{thebibliography}
\end{document}